\numberwithin{equation}{section}
\numberwithin{figure}{section}
  \theoremstyle{definition}
  \newtheorem{defn}{\protect\definitionname}
\theoremstyle{plain}
\newtheorem{thm}{\protect\theoremname}
  \theoremstyle{remark}
  \newtheorem{rem}{\protect\remarkname}
  \theoremstyle{plain}
  \newtheorem{lem}{\protect\lemmaname}
  \providecommand{\definitionname}{Definition}
  \providecommand{\lemmaname}{Lemma}
  \providecommand{\remarkname}{Remark}
\providecommand{\theoremname}{Theorem}
\begin{document}

\title[ADDENDUM TO: ``DACOROGNA-MOSER THEOREM'']{ADDENDUM TO: ``DACOROGNA-MOSER THEOREM ON THE JACOBIAN DETERMINANT
EQUATION WITH CONTROL OF SUPPORT''}

\email{pteixeira.ir@gmail.com}

\maketitle
\centerline{\scshape Pedro Teixeira}
\medskip
{\footnotesize
\centerline{Centro de Matemática da Universidade do Porto}
\centerline{Rua do Campo Alegre, 687, 4169-007 Porto, Portugal}
}

\renewcommand{\thefootnote}{}
\footnote{2010 \emph{Mathematics Subject Classification}. Primary 35F30.}
\footnote{\emph{Key words and phrases}. Volume preserving diffeomorphism, volume correction, volume form, pullback equation,  control of support, optimal regularity.}
\renewcommand{\thefootnote}{\arabic{footnote}} \setcounter{footnote}{0}
\begin{abstract}
In Dacorogna-Moser theorem on the pullback equation between two prescribed
volume forms, control of support of the solutions can be obtained
from that of the initial data, while keeping optimal regularity. Briefly,
given any two (nondegenerate) volume forms $f,\,g\in C^{r,\alpha}(\overline{\varOmega})$,
$r\geq0$ an integer and $0<\alpha<1$, with the same total volume
on a bounded connected open set $\varOmega\subset\mathbb{R}^{n}$
and such that $\text{supp}(f-g)\subset\varOmega$, there exists a
$C^{r+1,\alpha}$ diffeomorphism $\varphi$ of $\overline{\varOmega}$
onto itself such that
\[
\begin{cases}
\varphi^{*}(g)=f\\
\text{supp}(\varphi-\text{id})\subset\varOmega
\end{cases}
\]
This result answers a problem implicitly raised on page 14 of Dacorogna-Moser's
original article (``On a partial differential equation involving
the Jacobian determinant'', \emph{Ann. Inst. H. Poincaré Anal. Non
Linéaire }\textbf{7} (1990), 1\textendash 26), and fully generalizes
the solution to the particular case of $g\equiv1$ (prescribed Jacobian
PDE, $\text{det}\,\nabla\varphi=f$) given in the author's paper ``Dacorogna-Moser
theorem on the Jacobian determinant equation with control of support'',
\emph{Discrete Cont. Dyn. Syst. }\textbf{37} (2017), 4071-4089.
\end{abstract}

\section{\textbf{Introduction}}

This short note presents a simple solution to the problem of finding,
in the Hölder setting, optimal regularity solutions to the pullback
equation $\varphi^{*}(g)=f$ between two prescribed (nondegenerate)
volume forms $f,\,g$ which coincide near $\partial\varOmega$ and
with the same total volume over a bounded connected open set $\varOmega\subset\mathbb{R}^{n}$,
the solution being a diffeomorphism of $\overline{\varOmega}$ onto
itself that coincides with the identity near $\partial\varOmega$.
This problem has been implicitly raised in Dacorogna and Moser \cite[p.14, iv]{DM}
and again in \cite[p.18-19]{CDK}. In \cite{TE} the solution was
given for the particular case of $g\equiv1$ all over $\overline{\varOmega}$
(prescribed Jacobian PDE, $\text{det}\,\nabla\varphi=f$), but at
the time we could not devise the solution to the problem in its full
generality. It turns out that the deduction of the general solution
from the particular case mentioned above is in fact quite simple,
the paper \cite{TE} already containing all the tools needed for that
purpose. Here we complete the solution to the problem in question.

\section{\textbf{Optimal regularity pullback between prescribed volume forms
with control of support}}

We recall two definitions introduced in \cite{TE} for brevity of
expression, and which actually do \emph{not} coincide with the more
standard definitions of domain and collar.
\begin{defn}
(\emph{Domain}). A bounded, connected open set $\varOmega\subset\mathbb{R}^{n}$
is called here a \emph{domain}. Domains with $C^{r}$ boundary (briefly
$C^{r}$ domains), $r\geq1$ an integer, are defined in the usual
way \cite[p.338]{CDK}. A domain is smooth if it is $C^{\infty}$.
\end{defn}
\begin{defn}
(\emph{Collar of} $\overline{\varOmega}$). If $\varOmega\subset\mathbb{R}^{n}$
is a smooth domain, there is a smooth embedding $\zeta:\partial\varOmega\times[0,\infty)\hookrightarrow\overline{\varOmega}$
such that $\zeta(x,0)=x$ (collar embedding). For each $\epsilon>0$
we call $U_{\epsilon}:=\zeta(\partial\varOmega\times[0,\epsilon${]})
a (compact)\emph{ collar of} $\overline{\varOmega}$. \medskip{}

\noindent \textbf{Convention. }As usual, volume forms on domains are
identified with scalar functions, via the natural correspondence $f=f(x)dx^{1}\wedge\ldots\wedge dx^{n}$.\medskip{}

A simple formulation of the main result is the following (c.f. Theorem
2 below):
\end{defn}
\begin{thm}
\emph{(Dacorogna-Moser theorem - Case $\text{supp}(f-g)\subset\varOmega)$.
}Let $\varOmega\subset\mathbb{R}^{n}$ be a bounded connected open
set, $r\geq0$ an integer and $0<\alpha<1$. Given $f,\,g\in C^{r,\alpha}(\overline{\varOmega})$
with $f\cdot g>0$ in $\overline{\varOmega}$ satisfying:
\[
\begin{cases}
\int_{\varOmega}f=\int_{\varOmega}g\\
\mathcal{\text{\emph{supp}}}(f-g)\subset\varOmega
\end{cases}
\]
there exists $\varphi\in\text{\emph{Diff}}{}^{r+1,\alpha}(\overline{\varOmega},\overline{\varOmega})$
satisfying:
\[
\begin{cases}
\varphi^{*}(g)=f\\
\text{\emph{supp}}(\varphi-\text{\emph{id}})\subset\varOmega.
\end{cases}
\]
\end{thm}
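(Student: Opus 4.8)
The plan is to reduce the pullback equation $\varphi^{*}(g)=f$ to the prescribed Jacobian equation $\det\nabla\beta=H$ already solved with control of support in \cite{TE}, the reduction being carried out by first straightening $g$ to a constant density and then \emph{conjugating} (not merely composing) with the straightening map.

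First I would localize. Since $\text{supp}(f-g)$ is a compact subset of $\varOmega$, I choose a smooth domain $\varOmega_{0}$ with $\text{supp}(f-g)\subset\varOmega_{0}$ and $\overline{\varOmega_{0}}\subset\varOmega$. On $\varOmega\setminus\varOmega_{0}$ one has $f=g$, and because $f-g$ is supported in $\varOmega_{0}$ we get $\int_{\varOmega_{0}}(f-g)=\int_{\varOmega}(f-g)=0$, so $\int_{\varOmega_{0}}f=\int_{\varOmega_{0}}g$. It therefore suffices to produce $\varphi_{0}\in\text{Diff}^{r+1,\alpha}(\overline{\varOmega_{0}},\overline{\varOmega_{0}})$ with $\varphi_{0}^{*}(g)=f$ on $\varOmega_{0}$ and $\text{supp}(\varphi_{0}-\text{id})\subset\varOmega_{0}$: extending $\varphi_{0}$ by the identity across $\partial\varOmega_{0}$ then yields a map $\varphi$ on $\overline{\varOmega}$ with the required regularity (the extension is smooth since $\varphi_{0}=\text{id}$ near $\partial\varOmega_{0}$), the correct equation ($\varphi^{*}g=g=f$ outside $\varOmega_{0}$), and the support condition. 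Passing to the smooth domain $\varOmega_{0}$ is precisely what lets me invoke both the classical Dacorogna--Moser theorem and the results of \cite{TE}.

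Next I would straighten $g$. Put $\lambda:=\tfrac{1}{|\varOmega_{0}|}\int_{\varOmega_{0}}g>0$. The classical Dacorogna--Moser theorem \cite{DM}, applied to the two positive $C^{r,\alpha}$ densities $g$ and $\lambda$ (which have equal total mass on $\varOmega_{0}$, and for which \emph{no} control of support is needed), furnishes $a\in\text{Diff}^{r+1,\alpha}(\overline{\varOmega_{0}},\overline{\varOmega_{0}})$ with $a^{*}(g)=\lambda$ and $a=\text{id}$ on $\partial\varOmega_{0}$. I then set $H:=\lambda^{-1}a^{*}(f)=\lambda^{-1}(f\circ a)\det\nabla a$. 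One checks that $H\in C^{r,\alpha}(\overline{\varOmega_{0}})$ and $H>0$; a change of variables gives $\int_{\varOmega_{0}}H=\lambda^{-1}\int_{\varOmega_{0}}f=|\varOmega_{0}|$; and since $H-1=\lambda^{-1}a^{*}(f-g)$, its support equals $a^{-1}\!\big(\text{supp}(f-g)\big)$, a compact subset of $\varOmega_{0}$. Thus $H$ satisfies the hypotheses of the main theorem of \cite{TE}, which yields $\beta\in\text{Diff}^{r+1,\alpha}(\overline{\varOmega_{0}},\overline{\varOmega_{0}})$ with $\det\nabla\beta=H$ and $\text{supp}(\beta-\text{id})\subset\varOmega_{0}$.

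Finally I set $\varphi_{0}:=a\circ\beta\circ a^{-1}$, which is again a $C^{r+1,\alpha}$ diffeomorphism. The pullback algebra gives $\varphi_{0}^{*}(g)=(a^{-1})^{*}\beta^{*}a^{*}(g)=(a^{-1})^{*}(\lambda\det\nabla\beta)=(a^{-1})^{*}(\lambda H)=(a^{-1})^{*}a^{*}(f)=f$, as required. The hard part is the control of support, and this is exactly where conjugation rather than plain composition is essential: a straightening $a$ of $g$ to a \emph{constant} cannot equal the identity near $\partial\varOmega_{0}$ (since $g$ is not constant there), so a composition $a\circ\beta$ would destroy the support condition; but for the conjugate one has $\varphi_{0}=\text{id}$ wherever $\beta=\text{id}$, whence $\text{supp}(\varphi_{0}-\text{id})\subset a\big(\text{supp}(\beta-\text{id})\big)$, again a compact subset of $\varOmega_{0}$, irrespective of the boundary behaviour of $a$. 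Extending $\varphi_{0}$ by the identity as above completes the construction of $\varphi$.
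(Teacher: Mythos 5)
Your argument is correct, and it assembles the same two black boxes as the paper --- the classical Dacorogna--Moser theorem \cite{DM} (optimal regularity, no support control) and the main theorem of \cite{TE} (support control for $\det\nabla\beta=H$ when $\text{supp}(H-1)\subset\varOmega$) --- but by a genuinely different decomposition. The paper, after the same localization to a smooth subdomain $\varOmega'$, fixes a \emph{collar} $U$ of $\overline{\varOmega'}$ avoiding $\text{supp}(f-g)$, solves $\det\nabla\varPhi=f$ by \cite[Theorem 1']{DM}, and then invokes a ``concordant solutions'' lemma (its Lemma 2) producing $\phi$ with $\det\nabla\phi=g$ and $\phi=\varPhi$ near $U$, whence $\varphi=\phi^{-1}\circ\varPhi$; that lemma is itself proved by transporting $g$ to $h=(\varPhi^{-1})^{*}(g)$ and applying \cite[Theorem 1]{TE} on the collar complement $\overline{\varOmega'}\setminus\varPhi(U)$, a possibly non-smooth domain, which is admissible precisely because \cite[Theorem 1]{TE} holds for arbitrary bounded connected open sets. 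You instead straighten $g$ to its mean $\lambda$, transport $f$ to $H=\lambda^{-1}a^{*}(f)$ (note $H$ stays in $C^{r,\alpha}$ exactly because $a$ is one degree better than the data), apply \cite[Theorem 1]{TE} on the smooth subdomain itself, and conjugate: the support control then drops out of the one-line inclusion $\text{supp}(a\circ\beta\circ a^{-1}-\text{id})\subset a(\text{supp}(\beta-\text{id}))$, with no collar at all. In fact the two constructions are mirror images: unwinding the paper's formulas gives $\varphi=\varPhi^{-1}\circ\varTheta^{-1}\circ\varPhi$, also a conjugation, but by the $f$-solution rather than by the $g$-straightening. What your version buys is the elimination of the collar and concordance machinery; what the paper's organization buys is that Lemma 2 (concordant solutions) and Lemma 1 are isolated as statements of independent interest, reused to prove its quantitative Theorem 2. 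One small patch: the hypothesis is $f\cdot g>0$, not $f,\,g>0$, so you should first replace $(f,g)$ by $(-f,-g)$ when both are negative (the paper handles this normalization inside its Lemma 1 by passing to $|\lambda f|,\,|\lambda g|$); with that adjustment your proof is complete.
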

\begin{proof}
Fix a smooth domain $\varOmega'$ such that $\overline{\varOmega'}\subset\varOmega$
and $\text{supp}(f-g)\subset\varOmega'$ (use e.g. \cite[Lemma 1]{TE}).
Then 
\[
\int_{\varOmega'}f=\int_{\varOmega}f-\int_{\varOmega\setminus\varOmega'}f=\int_{\varOmega}g-\int_{\varOmega\setminus\varOmega'}g=\int_{\varOmega'}g
\]
Fix a collar $U$ of $\overline{\varOmega'}$ small enough so that
$\text{supp}(f-g)$ is contained in the open set $\overline{\varOmega'}\setminus U$.
Clearly $f=g$ in a small neighbourhood of $U$ in $\overline{\varOmega'}$.
Apply Lemma 1 below to $f|_{\overline{\varOmega'}}$, $g|_{\overline{\varOmega'}}$,
$U$ and $\overline{\varOmega'}$, thus finding $\varphi\in\text{Diff}{}^{r+1,\alpha}(\overline{\varOmega},\overline{\varOmega})$
such that $\varphi^{*}(g|_{\overline{\varOmega'}})=f|_{\overline{\varOmega'}}$
and $\text{supp}(\varphi-\text{id})\subset\varOmega'$. Now, $\varphi$
extends by $\text{id}$ to the whole $\overline{\varOmega}$, in the
$C^{r+1,\alpha}$ class, thus providing the desired diffeomorphism
(as $\text{supp}(f-g)\subset\varOmega'$).
\end{proof}
\begin{rem}
As it is easily seen, in Theorem 1 the hypothesis $f,\,g\in C^{r,\alpha}(\overline{\varOmega})$
can be substantially weakened without compromising the conclusions.
It is enough to assume that (a) the restrictions of $f,\,g$ to $\varOmega$
are $C^{r,\alpha}$ and (b) $\int_{\varOmega}f=\int_{\varOmega}g<\infty$.
Actually, the hypothesis can be further weakened, for instance, the
conclusions still hold if $f$ and $g$ are arbitrary functions on
$\overline{\varOmega}$ such that (a') the restriction of $f$ and
$g$ to the closure of some smooth (sub)domain $\varOmega'$ satisfying
$\overline{\varOmega'}\subset\varOmega$ and $\text{supp}(f-g)\subset\varOmega'$
are in $C^{r,\alpha}$ and satisfy $f\cdot g>0$ (in $\overline{\varOmega'}$),
and (b') $\int_{\varOmega'}f=\int_{\varOmega'}g$ (note that the hypothesis
$\text{supp}(f-g)\subset\varOmega$ guarantees the existence of such
$\varOmega'$, see e.g. \cite[Lemma 1]{TE}). For then, the diffeomorphism
of $\overline{\varOmega'}$ onto itself solving $\varphi^{*}(g)=f$
in $\overline{\varOmega'}$ (obtained through Lemma 1) extends by
the identity to the whole $\overline{\varOmega}$ (in the $C^{r+1,\alpha}$
class) and $f=g$ in $\overline{\varOmega}\setminus\varOmega'$ by
hypothesis, thus everything agrees. 
\end{rem}
\begin{lem}
Let $\varOmega\subset\mathbb{R}^{n}$ be a bounded connected open
smooth set and $U$ a collar of $\overline{\varOmega}$. Let $r\geq0$
be an integer and $0<\alpha<1$. Given $f,\,g\in C^{r,\alpha}(\overline{\varOmega})$
with $f\cdot g>0$ in $\overline{\varOmega}$ satisfying:
\[
\begin{cases}
\int_{\varOmega}f=\int_{\varOmega}g\\
f=g & \text{in a neighbourhood of \,}U
\end{cases}
\]
there exists $\varphi\in\text{\emph{Diff}}{}^{r+1,\alpha}(\overline{\varOmega},\overline{\varOmega})$
satisfying:
\[
\begin{cases}
\varphi^{*}(g)=f\\
\varphi=\text{\emph{id}} & \text{in a neighbourhood of \,}U.
\end{cases}
\]
\end{lem}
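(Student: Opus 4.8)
The plan is to straighten $g$ to the constant volume form, thereby turning $\varphi^{*}(g)=f$ into a prescribed Jacobian equation to which the result of \cite{TE} (the case $g\equiv 1$, with control of support) applies. First I would dispose of two harmless normalizations. Since $f\cdot g>0$, the pair $(f,g)$ has constant sign; replacing it by $(-f,-g)$ if necessary (which changes neither hypothesis nor the equation, as $\varphi^{*}(-g)=-\varphi^{*}(g)$) I may assume $f,g>0$. Multiplying $f$ and $g$ by the common positive constant $\bigl(\int_{\Omega}1\bigr)/\bigl(\int_{\Omega}f\bigr)$ scales both sides of $\varphi^{*}(g)=f$ equally and leaves ``$f=g$ near $U$'' intact, so I may further assume $\int_{\Omega}f=\int_{\Omega}g=\int_{\Omega}1=:V$. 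Put $h:=f/g\in C^{r,\alpha}(\overline{\Omega})$; then $h>0$, $h$ is unaffected by the rescaling, and, decisively, $h\equiv 1$ on the neighbourhood of $U$ where $f=g$.

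Next I would straighten $g$ by the classical Dacorogna--Moser theorem \cite{DM}: there is $w\in\text{Diff}^{r+1,\alpha}(\overline{\Omega},\overline{\Omega})$ with $\det\nabla w=g$ and $w=\text{id}$ on $\partial\Omega$. The boundary condition is what will let support control survive the reduction: since $w$ fixes $\partial\Omega$ pointwise, it carries neighbourhoods of $\partial\Omega$ to neighbourhoods of $\partial\Omega$ and sends the given collar $U$ onto a collar $w(U)$ of $\overline{\Omega}$.

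I then seek the solution in conjugated form $\varphi=w^{-1}\circ\Phi\circ w$. Writing $\mathbf 1$ for the constant volume form, one has $w^{*}(\mathbf 1)=\det\nabla w=g$, hence $(w^{-1})^{*}(g)=\mathbf 1$, and therefore
\[
\varphi^{*}(g)=w^{*}\Phi^{*}(w^{-1})^{*}(g)=w^{*}\bigl(\det\nabla\Phi\bigr)=\bigl((\det\nabla\Phi)\circ w\bigr)\,g .
\]
Thus $\varphi^{*}(g)=f$ holds if and only if $\Phi$ solves the prescribed Jacobian equation $\det\nabla\Phi=\widetilde h$, where $\widetilde h:=h\circ w^{-1}$. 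This $\widetilde h$ is exactly what \cite{TE} can handle: it lies in $C^{r,\alpha}(\overline{\Omega})$, is positive, satisfies $\int_{\Omega}\widetilde h=\int_{\Omega}h\,g=\int_{\Omega}f=V$ by the change of variables $y=w(x)$, and equals $1$ on $w\bigl(\{h=1\}\bigr)$, a neighbourhood of $w(U)$.

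Finally I would apply the prescribed Jacobian theorem of \cite{TE} with control of support to $\widetilde h$, relative to the collar $w(U)$, obtaining $\Phi\in\text{Diff}^{r+1,\alpha}(\overline{\Omega},\overline{\Omega})$ with $\det\nabla\Phi=\widetilde h$ and $\Phi=\text{id}$ on a neighbourhood of $w(U)$. Then $\varphi:=w^{-1}\circ\Phi\circ w\in\text{Diff}^{r+1,\alpha}(\overline{\Omega},\overline{\Omega})$ solves $\varphi^{*}(g)=f$ by the displayed computation, and $\varphi=\text{id}$ on the $w^{-1}$-image of that neighbourhood, which is a neighbourhood of $U$. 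The step I expect to be the crux is precisely this transport of the support condition through the conjugation: the region on which $\Phi$ is forced to be the identity is the distorted collar $w(U)$ rather than $U$ itself, so a careless choice of $w$ would ruin control of support. It is the boundary-fixing property $w|_{\partial\Omega}=\text{id}$ that keeps $w(U)$ a genuine (thick) collar and ensures that $w^{-1}$ returns the identity region to a full neighbourhood of the original collar $U$, rather than merely to a thin neighbourhood of $\partial\Omega$.
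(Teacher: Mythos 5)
Your overall route is essentially the paper's own, with the roles of $f$ and $g$ interchanged: the paper applies \cite[Theorem 1']{DM} to $f$ (getting $\varPhi$ with $\det\nabla\varPhi=f$) and then corrects it to a solution for $g$ agreeing with $\varPhi$ near $U$, so that its final answer $\varphi=\phi^{-1}\circ\varPhi$ is, unwound, exactly a conjugation $\varPhi^{-1}\circ\varTheta^{-1}\circ\varPhi$ of a support-controlled Jacobian solution by a classical Dacorogna--Moser diffeomorphism; you conjugate by the Dacorogna--Moser solution $w$ for $g$ instead. Your normalizations, the identity $\varphi^{*}(g)=\bigl((\det\nabla\Phi)\circ w\bigr)g$, and the bookkeeping for $\widetilde{h}=(f/g)\circ w^{-1}$ (regularity, positivity, total mass, $\widetilde{h}\equiv1$ near $w(U)$) are all correct.

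The gap is the final step, which you rightly flag as the crux but then justify by the wrong mechanism. The support-controlled theorem of \cite{TE} (Theorem 1 there, the one this paper uses) produces a solution with $\text{supp}(\Phi-\text{id})\subset\varOmega$, i.e.\ $\Phi=\text{id}$ only near $\partial\varOmega$; it does not, as stated, give $\Phi=\text{id}$ on a neighbourhood of the whole thick set $w(U)$ --- a solution of $\det\nabla\Phi=\widetilde{h}$ need not be the identity wherever the datum equals $1$, since mass must in general be transported across such regions. The boundary-fixing property $w|_{\partial\varOmega}=\text{id}$ does not supply the missing step either: \emph{any} homeomorphism of $\overline{\varOmega}$ onto itself carries $\partial\varOmega$ onto $\partial\varOmega$ and hence $U$ onto a compact neighbourhood of $\partial\varOmega$, so nothing hinges on that property. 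What is actually needed is the device in the paper's Lemma 2: pass to the complementary subdomain $\varOmega'':=\overline{\varOmega}\setminus w(U)$, which is a bounded connected open set (it is the image under the homeomorphism $w$ of $\overline{\varOmega}\setminus U$, itself homeomorphic to $\varOmega$ by the collar structure); note that $\widetilde{h}|_{\overline{\varOmega''}}>0$, that $\text{supp}(\widetilde{h}-1)\subset\varOmega''$, and that $\int_{\varOmega''}\widetilde{h}=\text{meas}\,\varOmega''$ precisely because $\widetilde{h}\equiv1$ on $w(U)$; apply \cite[Theorem 1]{TE} on $\varOmega''$ to get $\varTheta$ with $\det\nabla\varTheta=\widetilde{h}|_{\overline{\varOmega''}}$ and $\text{supp}(\varTheta-\text{id})\subset\varOmega''$; and extend $\varTheta$ by the identity to $\overline{\varOmega}$. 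That extension is the $\Phi$ you wanted: it is $C^{r+1,\alpha}$, solves $\det\nabla\Phi=\widetilde{h}$ on all of $\overline{\varOmega}$, and equals the identity on a genuine neighbourhood of $w(U)$. With this insertion your argument is complete.
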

\begin{proof}
(Lemma 1). Replace $f,\,g$ respectively by the positive volume forms
$|\lambda f|,\,|\lambda g|$ where $\lambda=\text{meas}\,\varOmega/\int_{\varOmega}\,f$,
thus getting $\int_{\varOmega}\,f=\text{meas}\,\varOmega=\int_{\varOmega}\,g$
and keeping the coincidence of $f$ and $g$ in a neighbourhood of
$U$. Apply in first place \cite[Theorem 1']{DM} to get a solution
$\varPhi\in\text{Diff}{}^{r+1,\alpha}(\overline{\varOmega},\overline{\varOmega})$
to $\text{det}\,\nabla\varPhi=f$ and then apply Lemma 2 below to
the data $\varPhi$ and $g$ to get a diffeomorphism $\phi$ satisfying
the conclusions of that lemma. The desired pullback between the two
original volume forms is then given by the diffeomorphism $\varphi=\phi^{-1}\circ\varPhi$.
\end{proof}
The next result generalizes \cite[Theorem 7]{TE} to the case of the
pullback equation between any two (nondegenerate) volume forms with
the same total volume (see the introduction to Section 7 in \cite{TE}
for the motivation). The proof, relying on Lemma 1, directly follows
the pattern of that of \cite[Theorem 7]{TE} and presents no difficulty.
The statement employs the convention $d(\emptyset,\partial\varOmega):=\text{inradius}\,\varOmega$,
introduced and explained in \cite[Section 7]{TE} (otherwise, for
any two nonvoid subsets of $\mathbb{R}^{n}$, $d(\cdot,*)$ denotes
the euclidean distance between them). 
\begin{thm}
Let $\varOmega\subset\mathbb{R}^{n}$ be a bounded connected open
set, $r\geq0$ an integer and $0<\alpha<1$. For each $0<c\leq R:=\text{\emph{inradius}}\,\varOmega$
there exists a neighbourhood $V_{c}$ of $\partial\varOmega$ in $\overline{\varOmega}$
such that: given any $f,\,g\in C^{r,\alpha}(\overline{\varOmega})$
with $f\cdot g>0$ in $\overline{\varOmega}$ and any $0<d\leq R$
satisfying:
\[
\begin{cases}
\int_{\varOmega}f=\int_{\varOmega}g\\
d(\text{\emph{supp}}(f-g),\,\partial\varOmega)\geq d
\end{cases}
\]
there exists $\varphi\in\text{\emph{Diff}}{}^{r+1,\alpha}(\overline{\varOmega},\overline{\varOmega})$
satisfying:
\[
\left\{ \begin{array}{llll}
\varphi^{*}(g)=f\\
\varphi=\text{\emph{id}} & in\:V_{d}.
\end{array}\right.
\]
\end{thm}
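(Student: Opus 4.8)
The plan is to follow the proof of \cite[Theorem 7]{TE} line by line, replacing its appeal to the prescribed-Jacobian solution (the case $g\equiv1$) by an appeal to Lemma~1, which handles the pullback between two arbitrary nondegenerate volume forms. The decisive structural point is that the neighbourhood $V_c$ must be manufactured out of $\varOmega$ and $c$ \emph{alone}, before any $f,\,g$ are given; the data $(f,g)$ and the number $d$ then merely select which member $V_d$ of the pre-built family is to be used.

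First I would fix, for each $0<c\leq R$, an auxiliary smooth subdomain. Writing $K_{c}:=\{x\in\overline{\varOmega}:d(x,\partial\varOmega)\geq c\}$ for the compact ``core'' at depth $c$, note that $K_{c}$ is a compact subset of the open set $\varOmega$; so by \cite[Lemma 1]{TE} there is a smooth domain $\varOmega_{c}$ with $K_{c}\subset\varOmega_{c}$, $\overline{\varOmega_{c}}\subset\varOmega$ and hence $d(\overline{\varOmega_{c}},\partial\varOmega)>0$. I would then set $V_{c}:=\overline{\varOmega}\setminus\overline{\varOmega_{c}}$, which is open in $\overline{\varOmega}$ and (since $\overline{\varOmega_{c}}\subset\varOmega$) a genuine neighbourhood of $\partial\varOmega$. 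Crucially, both $\varOmega_{c}$ and $V_{c}$ depend only on $\varOmega$ and $c$.

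Next, given $(f,g)$ and $d$ as in the hypotheses, the condition $d(\text{supp}(f-g),\partial\varOmega)\geq d$ forces $\text{supp}(f-g)\subset K_{d}\subset\varOmega_{d}$; in particular $f=g$ on $\overline{\varOmega}\setminus\varOmega_{d}$, so that $\int_{\varOmega_{d}}f=\int_{\varOmega}f-\int_{\varOmega\setminus\varOmega_{d}}f=\int_{\varOmega}g-\int_{\varOmega\setminus\varOmega_{d}}g=\int_{\varOmega_{d}}g$. Since the compact set $\text{supp}(f-g)$ sits at positive distance from $\partial\varOmega_{d}$, I can fix a collar $U$ of $\overline{\varOmega_{d}}$ thin enough that $f=g$ in a neighbourhood of $U$. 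Applying Lemma~1 to $f|_{\overline{\varOmega_{d}}}$, $g|_{\overline{\varOmega_{d}}}$, $U$ and $\overline{\varOmega_{d}}$ yields $\varphi\in\text{Diff}^{r+1,\alpha}(\overline{\varOmega_{d}},\overline{\varOmega_{d}})$ with $\varphi^{*}(g)=f$ on $\overline{\varOmega_{d}}$ and $\varphi=\text{id}$ in a neighbourhood of $U$. Extending $\varphi$ by the identity across $\partial\varOmega_{d}$ produces a $C^{r+1,\alpha}$ diffeomorphism of $\overline{\varOmega}$ (the extension is smooth near $\partial\varOmega_{d}$, where both pieces equal $\text{id}$) that solves $\varphi^{*}(g)=f$ everywhere---trivially on $\overline{\varOmega}\setminus\varOmega_{d}$, where $f=g$---and equals the identity on $\overline{\varOmega}\setminus\varOmega_{d}\supset V_{d}$.

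The only genuine obstacle is the bookkeeping of uniformity: the collar $U$ and the smaller neighbourhood of $U$ on which $\varphi=\text{id}$ do legitimately depend on $(f,g)$, and I must keep both confined to $\varOmega_{d}$ so that the fixed set $V_{d}=\overline{\varOmega}\setminus\overline{\varOmega_{d}}$ is never disturbed. This is automatic here, since $\text{supp}(f-g)\subset K_{d}$ lies in the interior of $\varOmega_{d}$ and $V_{d}$ is disjoint from $\overline{\varOmega_{d}}$. The degenerate case $\text{supp}(f-g)=\emptyset$ (where the convention gives $d=R$) is immediate: then $f=g$ and $\varphi=\text{id}$ works. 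Everything else follows the pattern of \cite[Theorem 7]{TE} and presents no difficulty.
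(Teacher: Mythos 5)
Your proposal is correct and is precisely the argument the paper intends: the paper does not write out a proof of this theorem, stating only that it ``directly follows the pattern of that of \cite[Theorem 7]{TE}, relying on Lemma 1,'' which is exactly what you have carried out --- building $V_c$ from $\varOmega$ and $c$ alone via a smooth subdomain $\varOmega_c\supset K_c$ (using \cite[Lemma 1]{TE}), then applying Lemma~1 on $\overline{\varOmega_d}$ with a thin collar and extending by the identity. Your attention to the uniformity point (that $V_c$ is fixed before $f,\,g$ are given, the data only selecting $V_d$) and to the degenerate case $\text{supp}(f-g)=\emptyset$ matches the conventions the paper imports from \cite[Section 7]{TE}.
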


\subsection{Concordant solutions to the Jacobian determinant equation for volume
forms agreeing in a collar}
\begin{lem}
\emph{(Existence of concordant solutions).} Let $\varOmega\subset\mathbb{R}^{n}$
be a bounded connected open smooth set and $U$ a collar of $\overline{\varOmega}$.
Let $r\geq0$ be an integer and $0<\alpha<1$. Suppose that $\varPhi\in\text{\emph{Diff}}{}^{r+1,\alpha}(\overline{\varOmega},\overline{\varOmega})$
and $g\in C^{r,\alpha}(\overline{\varOmega})$, $g>0$ in $\overline{\varOmega}$,
satisfy:
\[
\begin{cases}
\int_{\varOmega}g=\text{\emph{meas}}\,\varOmega\\
g=\text{\emph{det}}\,\nabla\varPhi & \text{in a neighbourhood of }U.
\end{cases}
\]
Then, there exists $\phi\in\text{\emph{Diff}}{}^{r+1,\alpha}(\overline{\varOmega},\overline{\varOmega})$
satisfying:
\[
\begin{cases}
\text{\emph{det}}\,\nabla\phi=g\\
\phi=\varPhi & \text{in a neighbourhood of }U.
\end{cases}
\]
\end{lem}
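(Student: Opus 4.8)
The plan is to reduce the statement to the prescribed Jacobian determinant equation with control of support already solved in \cite{TE} (the case $g\equiv1$), by correcting $\varPhi$ through a composition. Set $f:=\det\nabla\varPhi$; since $\varPhi\in\text{Diff}^{r+1,\alpha}(\overline{\varOmega},\overline{\varOmega})$ we have $f\in C^{r,\alpha}(\overline{\varOmega})$, and $f>0$ because $\det\nabla\varPhi$ keeps a constant sign on the connected set $\varOmega$ and equals $g>0$ near $U$; by hypothesis $f=g$ on some open neighbourhood $W$ of $U$. I would look for $\phi$ of the form $\phi=\psi\circ\varPhi$. Since $\det\nabla\phi=(\det\nabla\psi\circ\varPhi)\cdot f$, the equation $\det\nabla\phi=g$ amounts to $\det\nabla\psi=(g/f)\circ\varPhi^{-1}$, while $\phi=\varPhi$ near $U$ amounts to $\psi=\text{id}$ near $\varPhi(U)$.

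First I would introduce $h:=(g/f)\circ\varPhi^{-1}$ and record its properties. As a composition of the $C^{r,\alpha}$ function $g/f$ with the $C^{r+1,\alpha}$ diffeomorphism $\varPhi^{-1}$, we have $h\in C^{r,\alpha}(\overline{\varOmega})$, and clearly $h>0$. The change of variables $x=\varPhi(y)$, $dx=f(y)\,dy$, gives
\[
\int_{\varOmega}h=\int_{\varOmega}\Big(\frac{g}{f}\circ\varPhi^{-1}\Big)=\int_{\varOmega}\frac{g}{f}\,f=\int_{\varOmega}g=\text{meas}\,\varOmega.
\]
Moreover, since $f=g$ on $W$ and $\varPhi(\partial\varOmega)=\partial\varOmega$, we have $h=1$ on the open neighbourhood $V:=\varPhi(W)$ of $\partial\varOmega$. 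Thus $h$ fulfils exactly the hypotheses under which the prescribed Jacobian equation is solvable with control of support.

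The delicate point, and what I expect to be the \textbf{main obstacle}, is that the compact set $\varPhi(U)$, near which I must force $\psi=\text{id}$, is the image of a collar under a merely $C^{r+1,\alpha}$ diffeomorphism and so need not be a collar in the sense of our definition. To bridge this I would interpose a genuine smooth collar: as $\varPhi(U)$ is a compact neighbourhood of $\partial\varOmega$ contained in the open set $V$, a standard smoothing argument yields a smooth collar $\hat{U}$ of $\overline{\varOmega}$ with $\varPhi(U)\subseteq\hat{U}\subseteq V$ (push the $C^{r+1,\alpha}$ inner face of $\varPhi(U)$ slightly inward into $V$ and smooth it to form the outer face of $\hat{U}$). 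On a neighbourhood of $\hat{U}$ one still has $h=1$. Applying the control-of-support solution of \cite{TE} to the data $h$, $\hat{U}$ and $\overline{\varOmega}$ then furnishes $\psi\in\text{Diff}^{r+1,\alpha}(\overline{\varOmega},\overline{\varOmega})$ with $\det\nabla\psi=h$ and $\psi=\text{id}$ on some open set $O\supseteq\hat{U}$.

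Finally I would set $\phi:=\psi\circ\varPhi\in\text{Diff}^{r+1,\alpha}(\overline{\varOmega},\overline{\varOmega})$ and verify the two conclusions. For the equation, $\det\nabla\phi=(\det\nabla\psi\circ\varPhi)\cdot\det\nabla\varPhi=(h\circ\varPhi)\,f=(g/f)\,f=g$. For the support, $\psi=\text{id}$ on $O\supseteq\hat{U}\supseteq\varPhi(U)$ gives $\phi=\varPhi$ on $\varPhi^{-1}(O)$, which is an open neighbourhood of $U$. The computations of $h$ and of $\det\nabla\phi$ are routine; the only genuine work is the collar reconciliation of the preceding paragraph.
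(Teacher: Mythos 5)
Your reduction coincides exactly with the paper's: the function $h=(g/f)\circ\varPhi^{-1}=(g\circ\varPhi^{-1})\,\text{det}\,\nabla\varPhi^{-1}$, the verification that $h\in C^{r,\alpha}(\overline{\varOmega})$, $h>0$, $\int_{\varOmega}h=\text{meas}\,\varOmega$ and $h=1$ near $\varPhi(U)$, and the final composition $\phi=\psi\circ\varPhi$ are all identical to the paper's proof. The divergence is in how you arrange ``$\psi=\text{id}$ near $\varPhi(U)$'', and there your proposal has a genuine gap, precisely at the step you yourself flag as the main obstacle. Producing a smooth collar $\hat{U}$ of $\overline{\varOmega}$ (in the paper's sense: the image of $\partial\varOmega\times[0,\epsilon]$ under a \emph{smooth} collar embedding) with $\varPhi(U)\subseteq\hat{U}\subseteq V$ is not a ``standard smoothing argument''. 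The inner face of $\varPhi(U)$ is only a $C^{r+1,\alpha}$ hypersurface --- merely $C^{1,\alpha}$ when $r=0$ --- and after replacing it by a nearby smooth hypersurface $\Sigma$ you must still show that the region between $\partial\varOmega$ and $\Sigma$ carries a smooth \emph{product} structure arising from a collar embedding of $\overline{\varOmega}$: this needs $C^{1}$-approximation of embeddings, an isotopy or nowhere-critical-function argument to get the product, and Whitney-type smoothing to upgrade a $C^{1}$ product structure to a smooth one. None of this is in your sketch; ``push the inner face slightly inward and smooth it'' does not by itself deliver what the definition of collar demands. There is also a citation issue: the result of \cite{TE} you invoke takes a collar as input data, and you should check such a statement actually exists there in that form.

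The missing idea --- which makes the entire detour unnecessary --- is that \cite[Theorem 1]{TE} holds for an \emph{arbitrary} bounded connected open set, with no boundary regularity whatsoever. So one simply takes the (generally non-smooth) domain $\varOmega'=\overline{\varOmega}\setminus\varPhi(U)$, which is open, bounded and connected (homeomorphic to $\varOmega$), notes that $\int_{\varOmega'}h=\text{meas}\,\varOmega'$ and $\text{supp}(h-1)\subset\varOmega'$ because $h=1$ in a neighbourhood of $\varPhi(U)$, and applies \cite[Theorem 1]{TE} there to obtain $\varTheta\in\text{Diff}^{r+1,\alpha}(\overline{\varOmega'},\overline{\varOmega'})$ with $\text{det}\,\nabla\varTheta=h|_{\overline{\varOmega'}}$ and $\text{supp}(\varTheta-\text{id})\subset\varOmega'$; extending $\varTheta$ by the identity across $\varPhi(U)$ produces your $\psi$ directly. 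This is exactly what the paper does. Note, moreover, that if your intended application of \cite{TE} were its support-controlled Theorem 1 applied to $\varOmega\setminus\hat{U}$, then the smoothness of $\hat{U}$ would never be used --- which exposes the smooth-collar construction as superfluous. In short: right reduction, but the collar-smoothing step is both unproved (and genuinely delicate at this regularity) and avoidable.
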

\begin{proof}
Suppose that $\varPhi$, $g$ and $U$ are as in the statement. The
open set $\varOmega'=\overline{\varOmega}\setminus\varPhi(U)$ is
homeomorphic to $\varOmega$ and thus it is a domain containing $\varPhi(\text{supp}(g-\text{det}\,\nabla\varPhi))$.
Let 
\[
h=(g\circ\varPhi^{-1})\text{det}\,\nabla\varPhi^{-1}\in C^{r,\alpha}(\overline{\varOmega})
\]
Then 
\[
\int_{\varOmega}h=\int_{\varOmega}g=\text{meas}\,\varOmega
\]
 by the change of variables formula, and $h=1$ in a neighbourhood
of $\varPhi(U)$, thus 
\[
\int_{\varOmega'}h=\text{meas}\,\varOmega'\text{,}\quad\text{supp}(h-1)\subset\varOmega'\quad\text{and}\quad h>0.
\]
Apply \cite[Theorem 1]{TE} to the data $\varOmega'$ and $h|_{\overline{\varOmega'}}$,
getting $\varTheta\in\text{Diff}{}^{r+1,\alpha}(\overline{\varOmega'},\overline{\varOmega'})$
satisfying
\[
\begin{cases}
\text{det}\,\nabla\varTheta=h|_{\overline{\varOmega'}}\\
\text{supp}(\varTheta-\text{id})\subset\varOmega'
\end{cases}
\]
Extend $\varTheta$ by $\text{id}$ to the whole $\overline{\varOmega}.$
Then 
\[
\phi=\varTheta\circ\varPhi
\]
is the desired diffeomorphism, since
\begin{itemize}
\item $\phi=\text{id}\circ\varPhi=\varPhi$ ~in a neighbourhood of $U$
\item $\text{det}\,\nabla\phi=g$ ~by definition of $h$.
\end{itemize}
\end{proof}


\begin{thebibliography}{CDK}
\bibitem[CDK]{CDK}G. Csató, B. Dacorogna, O. Kneuss, \emph{The Pullback
Equation for Differential Forms.} Progress in Nonlinear Differential
Equations and their Applications, 83. Birkhäuser/Springer, 2012. MR2883631

\bibitem[DM]{DM}B. Dacorogna, J. Moser, On a partial differential
equation involving the Jacobian determinant. \emph{Ann. Inst. H. Poincaré
Anal. Non Linéaire }\textbf{7} (1990), 1\textendash 26. MR1046081

\bibitem[TE]{TE}P. Teixeira, Dacorogna-Moser theorem on the Jacobian
determinant equation with control of support. \emph{Discrete Contin.
Dyn. Syst. }\textbf{37} (2017), 4071-4089. MR3639452\medskip{}
\end{thebibliography}
\end{document}